\newtheoremstyle{main}% name
  {12pt}%      Space above, empty = `usual value'
  {12pt}%      Space below
  {\slshape}% Body font
  {}%         Indent amount (empty = no indent,\parindent  = para inden
  {\scshape}% Thm head font
  {.}%        Punctuation after thm head
  {12pt}% Space after thm head: \newline = linebreak
  {}%         Thm head spec
\theoremstyle{main}
\newtheorem{thm}{Theorem}%[subsection]
\newtheorem{cor}{Corollary}%[section][thm]
\newtheorem{lem}{Lemma}%[section][thm]
\newtheoremstyle{Defn}
 {}%      Space above, empty = `usual value'
  {12pt}%      Space below
  {\normalfont}% Body font
 {}%         Indent amount (empty = no indent,\parindent  = para indent)
  {\scshape}% Thm head font
  {.}%        Punctuation after thm head
  {12pt}% Space after thm head: \newline = linebreak
  {}%         Thm head spec
\theoremstyle{Defn}
\newtheorem{defn}{Definition}%[section]
\newtheorem{problem}{Problem}%[section]
\newtheorem{rem}{Remark}%[subsection]
\newtheoremstyle{Defn}
 {}%      Space above, empty = `usual value'
  {12pt}%      Space below
  {\normalfont}% Body font
  {}%         Indent amount (empty = no indent,\parindent  = para indent)
  {\scshape}% Thm head font
  {.}%        Punctuation after thm head
  {12pt}% Space after thm head: \newline = linebreak
  {}%         Thm head spec
\title [Polyhedra for which a homotopy domination
 is an equivalence]
{Polyhedra for which
 every homotopy domination over itself is a homotopy
 equivalence }
\author [D. Ko{\l}odziejczyk] {Danuta Ko{\l}odziejczyk}
\address{Faculty of Mathematics and Information Science,
Warsaw
University of Techno\-lo\-gy, ul. Koszykowa 75,
00-662 Warsaw,
Poland}
\email{dkolodz@mimuw.edu.pl; dakolodz@gmail.com}
\subjclass[2010]{Primary 55P15; Secondary 55P55}
\keywords{polyhedron, $ANR$,  homotopy type,
homotopy domination, shape, quasi-homeomorphism,
Hopfian, one-related presentation, knot group, soluble group,
elementary amenable group, word-hyperbolic group,
limit group\\}
\thanks{Research partially supported by the Ministry of
Science and Higher Education grant \# 1 P03A 005 30.
}
\begin{document}

\bibliographystyle{alpha}

\maketitle

%\today

\begin{abstract}
We consider a natural question: {\it "Is it true that each
homotopy domi\-nation of a polyhedron over itself is
a homotopy equi\-va\-lence?"\/} and a strongly related
problem of K. Borsuk (1967): {\it "Is it true that two ANR's
homotopy dominating each other have the same homotopy type?"\/}
The answer was earlier known to be positive for
manifolds (Bernstein-Ganea, 1959), $1$-dimensional polyhedra
and polyhedra with polycyclic-by-finite fundamental groups
(DK, 2005). Thus one may ask, if there exists a counterexample
among $2$-dimensional polyhedra with so\-lu\-ble fundamental
groups. In this paper we show that it cannot be found in the
class of $2$-dimensional polyhedra  with so\-lu\-ble
fundamental groups $G$
 with cd$G \leq 2$ (and soluble can be replaced
 here by a wider class of elementary amenable groups).
We prove more general fact, that there are no counterexamples
in the class of $2$-dimensional polyhedra, whose fundamental 
groups have finite aspherical presentations and are Hopfian
(or more general, weakly Hopfian).
In particular, a counterexample does not
exist also among $2$-dimensional polyhedra, whose fundamental
groups are knot groups and in the class of $2$-dimensional 
polyhedra with one-related torsion-free, Hopfian fundamental 
groups.
The results can be applied also, for example, to hyperbolic
groups or limit groups with finite aspherical presentations.

For the same classes of polyhedra we get a positive
answer to another open question: {\it "Are the homotopy
types of two quasi-homeomorphic ANR's equal?"\/}
\end{abstract}

%\maketitle

\section{Introduction}
In this paper we study two natural but still open problems: 
{\it "Is
it true that for every polyhedron $P$, each homotopy domination of
$P$ over itself is a homotopy equivalence?"}\/  
and the famous problem of K. Borsuk
 (1967) [B1, Ch.IX, Problem (12.7)]: {\it "Is it true that two
 $ANR's$ homotopy
dominating each other have the same homotopy type?"\/}
(By a polyhedron we mean, as usual, a
finite one. For convenience, we will assume
without loss of generality, that each
polyhedron and $ANR$ is connected.)

\smallskip
They are closely related to another open problem in
geometric topology:
{\it "Are the homotopy types (or equivalently, shapes) of two
quasi-homeomorphic ANR's equal?"\/} [B2, Problem (12.7), p.
233] that will be also considered here.

\smallskip
In dimension $1$ the answers to all the above questions are
positive. Indeed, every $1$-dimensional polyhedron has the
homotopy type of a finite wedge of circles $S^1$, so $K(F,1)$,
where $F$ is a free, finitely generated group. It is
well-known that every finitely generated, free group is
Hopfian (Nielsen 1921, Hopf 1931).

\smallskip
By the results of  I. Bernstein and T. Ganea (1959),
 if $P$ is a manifold, then every homotopy
domination of $P$ over itself is a homotopy equivalence
$[$BG$]$. (It was generalized to the so-called Poincare
complexes in [Kw]).

\smallskip
For polyhedra with polycyclic-by-finite fundamental groups the
answers are also posi\-tive (see [K, Theorem 3,
 Theorem 5]). Thus, one may
ask how about $2$-dimensional polyhedra, in particular
$2$-dimensional polyhedra with so\-lu\-ble
fundamental groups.

\smallskip
In this paper we prove that for polyhedra
$P$ with $\dim P = 2$ and soluble (or, more general,
elementary amenable) fundamental groups $G$ satisfying
cd$G \leq 2$, there are no counterexamples. This is
a corollary to the main result that there are
no counterexamples among $2$-dimensional polyhedra, 
whose fundamental groups have
 finite aspherical presentations and
are weakly Hopfian (see Definition 3). It should be noted
that at
present there is not known any example of
a fini\-tely presented group which is not weakly Hopfian.

\smallskip
We also consider some other classes of finitely presented
groups satisfying conditions of our main theorem.
As one of the corollaries, we obtain that for each
$2$-dimensional polyhedron whose fundamental group is
a knot group, every homotopy domination over itself is
a homotopy equivalence. The same we get for $2$-dimensional
polyhedra whose fundamental groups are one-related,
torsion-free and Hopfian (note that in many cases
one-related, torsion-free groups are known to be Hopfian,
see, for example, [SSp]). We also obtain positive results
for $2$-dimensional polyhedra whose fundamental groups
are hyperbolic groups or limit groups with finite
aspherical presentations.

\smallskip
In a consequence, we also answer positively the third
question for the same classes of $2$-dimensional polyhedra.

\medskip
The results of this paper were presented by the author
at the "2010 International Conference on Topology and its
Applications" (Nafpaktos), the "25th Summer Conference on
Topology and its Applications 2010" (Kielce),
and were included in some bigger conference lectures.

\smallskip
\section{Preliminaries}

\smallskip
\begin{defn} Let  $\mathcal{S}$ be a class of groups.
A group $G$ is called {\it poly-$\mathcal{S}$\/} if it has
a finite series  $G = G_0 \triangleright
G_1 \triangleright \ldots \rhd G_l = 1$, for which each factor
$G_{i-1}/G_{i}$ $\in \mathcal{S}$ (where $1 \leq i \leq l$).
\end{defn}

\begin{defn} Let $\mathcal{P}, \mathcal{S}$ be some
classes of groups.  A group $G$ is said to be $\mathcal{P}$-{\it
by\/}-$\mathcal{S}$ if it has a normal subgroup
%$A \lhd G$,
$A \in \mathcal{P}$ such that $G/A \in \mathcal{S}$.
\end{defn}

\begin{defn}
 (i) A group $G$ is {\it
Hopfian\/} if every epimorphism $f:G \rightarrow G$ is an
automorphism (equivalently, $N = 1$ is the only normal subgroup
for which $G/N \cong G$). (ii) A group $G$ is {\it weakly
Hopfian\/} if $G = K \rtimes H$ and $H \cong G$ imply $K = 1$
(where $G = K \rtimes H$ means that  $H$ is a {\it retract\/} of $G$, i.
e. $G = K H$, $K \lhd G$, $K \cap H = 1$).
\end{defn}

\medskip
\begin{defn}
A group $G$ is {\it residually finite\/} if for every $g \in  G$, $g \neq 1$, there
exists a homomorphism $h$ from G onto a finite group $H < G$ such that $h(g) \neq 1$.
\end{defn}

\medskip
\begin{defn}
 A  module $M$ is called {\it Hopfian\/} if every
homomorphism $f: M \rightarrow M$ which is an epimorphism is an
isomorphism.
\end{defn}

\bigskip
\begin{rem}  Let $P$ be a polyhedron such that $\pi_1(P)$ is
weakly Hopfian and all the $\pi_i(X)$,
for $i = 2, \ldots, \dim P$, are Hopfian modules over
$Z \pi_1(P)$.
Then, by the Whitehead Theorem,
 every homotopy domination $d: P \rightarrow P$
 is a homotopy
equivalence.
\end{rem}

\bigskip
For the following theorem, see [K, Theorem 3, Theorem 5]:

\bigskip
{\sc Theorem.} {\it Let $P$ be a polyhedron  such that
the group $\pi_1(P)$ is polycyclic-by-finite. Then
every homotopy domination $d: P \rightarrow P$ is
a homotopy equivalence.}

\begin{flushright}
$\Box$
\end{flushright}

\begin{rem} In the proof of the above theorem in [K] we
applied the fact that if $G$ is policyclic-by-finite, then
every finitely generated module over $ZG$ is Hopfian
(for details and necessary references, see the
proofs in [K]).
\end{rem}

\medskip
Therefore it is worth to ask:

\begin{problem}
 Does there exist a polyhedron $P$
 with $\dim P = 2$ and soluble fundamental group
and a homotopy domination of $P$ over itself that is not a
homotopy equivalence?
\end{problem}

\medskip
%\begin{defn}
%(i) Recall that by a {\it cohomological dimension\/}
%of a group $G$ we mean  \em{cd}G
%= \sup \{$i s uch that $H^{i}(G,M)\neq 0$, for some
%$ZG$-module M\}. (ii) A {\it geometrical dimension\/},
%$\em{gd}G,\/$ of a group $G$ is a smallest dimension of a
%$CW$-complex $K(G,1)$  (see [Br], [BK]).
%\end{defn}

\begin{defn}
(i) Recall that by a {\it cohomological dimension\/}
of a group $G$ we mean  cd$G$
= $\sup$ \{$i$ such that $H^{i}(G,M)\neq 0$, for some
$ZG$-module $M$\}.
(ii) A {\it geometrical dimension\/},
gd$G$, of a group $G$ is a smallest dimension of a
$CW$-complex $K(G,1)$  (see [Br], [BK]).
\end{defn}

\medskip
From the results of this paper follows that for each
 polyhedron $P$ with  $\dim P = 2$ and so\-lu\-ble fundamental group  $G =
\pi_1(P)$ satisfying cd$G \leq 2$, every homotopy
domination of $P$ over itself is a homotopy equivalence.

\vskip 2mm
\bigskip
From now on $X \leq Y$ will denote that $X$ is homotopy
dominated by $Y$.

\vskip 2mm
\bigskip
\section {Main Theorems}

\bigskip
\begin{defn}
 A group presentation is said to be {\it
aspherical\/} if the standard $2$-dimensional $CW$-complex
associated with it is aspherical.
\end{defn}

\smallskip
\begin{thm}  %[Main Theorem]
Let $P$ be a polyhedron with $\dim P = 2$ such that
 $\pi_1(P)$ has a finite aspherical presentation and is weakly
Hopfian. Then every homotopy domination $d: P \rightarrow P$
is a homotopy equivalence.
\end{thm}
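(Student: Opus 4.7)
\emph{Plan.} Applying Remark 1, with $\dim P = 2$ it suffices to prove that $\pi_2(P)$ is a Hopfian $Z\pi_1(P)$-module and that $\pi_1(P)$ is weakly Hopfian (the latter being given). That $d_*$ is an automorphism of $\pi_1(P)$ follows almost immediately from the weak Hopfian hypothesis: from $d\circ u \simeq \mathrm{id}_P$ one has $d_* u_* = \mathrm{id}$, so $\pi_1(P) = \ker(d_*) \rtimes u_*(\pi_1(P))$ with $u_*(\pi_1(P)) \cong \pi_1(P)$, and weak Hopfianness forces $\ker(d_*) = 1$.

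The substance of the argument is the Hopfian property of $\pi_2(P)$. Let $K$ be the standard $2$-complex of the given finite aspherical presentation of $\pi_1(P)$; it is a finite $2$-dimensional $K(\pi_1(P),1)$, so the cellular chain complex $C_*(\widetilde K)$ of its universal cover is a finite free resolution of $Z$ over $Z\pi_1(P)$ of length $2$. The complex $C_*(\widetilde P)$ is itself a length-$2$ complex of finitely generated free $Z\pi_1(P)$-modules with $H_0 = Z$, $H_1 = 0$ and $H_2 = \pi_2(P)$. A comparison of these two complexes through the generalised Schanuel lemma identifies $\pi_2(P)$ as a finitely generated stably free $Z\pi_1(P)$-module.

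With this structural fact in hand, I lift $d$ and $u$ equivariantly to $\widetilde d, \widetilde u : \widetilde P \to \widetilde P$; the induced map $\widetilde d_*$ on $\pi_2(P) = H_2(\widetilde P)$ is a split surjection of $Z\pi_1(P)$-modules, producing a decomposition $\pi_2(P) \cong \ker(\widetilde d_*) \oplus \pi_2(P)$. Combined with stable freeness, this yields an isomorphism $\ker(\widetilde d_*) \oplus Z\pi_1(P)^m \cong Z\pi_1(P)^m$ for some $m$. The plan is then to feed this module-level splitting back into a group-theoretic construction---most naturally a mapping torus built along a putative non-trivial element of $\ker(\widetilde d_*)$---in order to produce a splitting of $\pi_1(P)$ with isomorphic retract and non-trivial kernel, contradicting the weak Hopfian hypothesis and forcing $\ker(\widetilde d_*) = 0$.

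The main obstacle is precisely this last bridge: translating \emph{module} Hopfianness at $\pi_2(P)$ into \emph{group} Hopfianness at $\pi_1(P)$, using only the weak Hopfian hypothesis and no global ring-theoretic input (such as Noetherianness of $Z\pi_1(P)$, which is unavailable beyond the polycyclic-by-finite case). All the preceding ingredients---equivariant lifting, the Schanuel comparison, the split-surjection decomposition---are standard; the genuine content sits in this translation. Once $\widetilde d_*$ is known to be an isomorphism on $\pi_2(P) = H_2(\widetilde P)$, the classical Whitehead theorem applied to the simply connected $2$-dimensional complex $\widetilde P$---whose only non-trivial reduced homology sits in degree $2$---yields that $\widetilde d$, and hence $d$, is a homotopy equivalence.
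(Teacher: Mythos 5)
You get essentially all the way to the paper's penultimate line and then stop short of the one fact that closes the argument. Your route to the structural statement is fine: using weak Hopfianness to make $d_*$ an isomorphism on $\pi_1$ is exactly what the paper does, and your Schanuel comparison of $C_*(\widetilde K)$ with $C_*(\widetilde P)$ yields $\pi_2(P)\oplus (Z G)^{a}\cong (Z G)^{b}$, which is the same conclusion the paper extracts from Whitehead's theorem on trees ($P\vee\bigvee_{m_P}S^2\simeq K\vee\bigvee_{m_K}S^2$ with $K$ aspherical, so $\pi_2(P)\oplus(ZG)^{(m_P)}\cong(ZG)^{(m_K)}$). Likewise your split surjection $\pi_2(P)\cong\ker(\widetilde d_*)\oplus\pi_2(P)$ is the paper's ``$\pi_2(P)\oplus N\cong\pi_2(P)$ with $N$ nontrivial unless $d$ is an equivalence.'' Combining the two gives $\ker(\widetilde d_*)\oplus(ZG)^{m}\cong(ZG)^{m}$, and you correctly arrive there.

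The gap is what you do next. You propose to translate this module splitting back into a group-theoretic splitting of $\pi_1(P)$ via a mapping-torus construction and contradict weak Hopfianness a second time; you yourself flag this bridge as the ``main obstacle,'' and it is not carried out (nor is it clear it can be: the weak Hopfian hypothesis has already been fully spent on $\pi_1$, and $\ker(\widetilde d_*)$ lives in $\pi_2$, not in the fundamental group). No such bridge is needed. The statement that a finitely generated free $ZG$-module cannot be isomorphic to a proper direct summand of itself holds for \emph{every} group $G$ --- this is Kaplansky's theorem on direct finiteness of group rings ([Ka, p.~122]), and it is precisely the ``global ring-theoretic input'' you assert is unavailable beyond the polycyclic-by-finite case. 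Invoking it, $\ker(\widetilde d_*)\oplus(ZG)^{m}\cong(ZG)^{m}$ forces $\ker(\widetilde d_*)=0$ at once, and your concluding Whitehead-theorem paragraph then finishes the proof. So the argument is repairable by a one-line citation, but as written the decisive step is missing and replaced by a speculative construction that does not work.
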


\begin{proof}
Suppose that there exists a homotopy domination $P \geq P$
that is not homotopy equivalence. Since $G = \pi_1(P)$ is
weakly Hopfian, this domination induces an isomorphism
$\pi_1(P) \rightarrow \pi_1(P)$.

\bigskip
The Whitehead Theorem on Trees [Wh] states that, if $P$
and $Q$ are two finite
$2$-dimensional $CW$-complexes with $\pi_1(P)\cong \pi_1(Q)$,
then there exist integers $m_P$ and $m_Q$ such that $$P \vee
\bigvee\limits_{m_P} S^2 \simeq Q \vee
\bigvee\limits_{m_Q} S^2.$$

\smallskip
 Since there exists a finite $2$-dimensional $CW$-complex
 $K = K(G,1)$, it follows that
 $$P \vee \bigvee\limits_{m_P} S^2 \simeq K \vee
\bigvee\limits_{m_K} S^2,$$ for some integers $m_P$ and $m_K$.  Hence we have an isomorphism of $ZG$-modules
  $$\pi_2(P) \oplus (Z G)^{(m_P)} \cong
(ZG)^{(m_K)}.$$

\smallskip
Indeed, it is known that for any $2$-dimensional polyhedron
$Q$, $\pi_2(Q
\vee S^2) \cong
\pi_2(Q) \oplus \pi_2(S^2)$ as $Z \pi_1(Q)$-modules.

\medskip
Suppose that $d: P \rightarrow P$ is a homotopy
domination but not a
homotopy equivalence. Then exists a nontrivial $ZG$-module
$N$ such
that $\pi_2(P) \oplus N \cong \pi_2(P)$.
Otherwise, d induces also an isomorphism
of $\pi_2$, and  is a homotopy equivalence, whis
is a contradiction.

\medskip
Therefore $(ZG)^{(m_K)} \oplus N \cong (ZG)^{(m_K)}$. Thus
$(ZG)^{(m_K)}$ is isomorphic to a proper direct factor of
itself.
But this is impossible. Indeed, for any group G, any finitely
generated free $ZG$-module $(ZG)^{(m)}$ (where $m$ is an
integer)
cannot be isomorphic to a proper direct factor of itself ---
 from the result of I. Kaplansky  [Ka, p.122]. \end{proof}

\medskip
\begin{rem} (i) Obviously, every Hopfian group is weakly Hopfian. (ii) It should be noted that there is not known at present any example of a finitely presented group that is not weakly Hopfian. (iii) On the other hand, weak hopficity was proven for some classes of finitely presented groups. For example, every nilpotent-by-nilpotent group is weakly hopfian. (iv) It is also known that any  finitely generated
abelian-by-nilpotent group is Hopfian (P. Hall).
\end{rem}

\bigskip
So let us formulate the following:

\begin{problem}
Does there exist a finitely presented
group which  is not weakly Hopfian?
\end{problem}

\begin{problem}
Does there exist a finitely presented
group which  has a finite aspherical presentation
and is not weakly
Hopfian?
\end{problem}

\smallskip
\begin{cor}  Let $P$ be a polyhedron with $\dim P = 2$
such that
 $\pi_1(P)$ has a~finite aspherical presentation and is Hopfian.
Then every homotopy domination $d: P \rightarrow P$ is a homotopy
equivalence.
\end{cor}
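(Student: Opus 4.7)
The plan is to deduce Corollary~1 directly from Theorem~1, using only the elementary observation, already flagged in Remark~2(i), that every Hopfian group is weakly Hopfian. No new homotopy-theoretic machinery should be required beyond what Theorem~1 already supplies.

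First I would verify the implication ``Hopfian $\Rightarrow$ weakly Hopfian''. Suppose $G$ is Hopfian and admits a decomposition $G = K \rtimes H$ in the sense of Definition~3(ii), so that there is a retraction $r : G \twoheadrightarrow H$ with $\ker r = K$, and suppose in addition that $H \cong G$ via some isomorphism $\varphi$. The composite $\varphi \circ r : G \to G$ is then a surjective endomorphism of $G$, hence by hopficity an automorphism, and in particular injective. Its kernel equals $\ker r = K$, so $K = 1$, which is exactly the weak Hopfian condition.

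Second, with this implication in hand, the hypotheses of Theorem~1 are met verbatim for $P$: we have $\dim P = 2$, $\pi_1(P)$ has a finite aspherical presentation by assumption, and $\pi_1(P)$ has just been shown to be weakly Hopfian. Invoking Theorem~1 therefore yields that every homotopy domination $d : P \to P$ is a homotopy equivalence.

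There is essentially no obstacle: the corollary is a formal consequence of Theorem~1 together with a one-line group-theoretic implication. The only subtle point worth noting is that Definition~3(ii) requires $H$ to be a \emph{retract} of $G$ (not merely a complement in a semidirect product sense), which is precisely what guarantees the existence of the retraction $r$ and hence forces $\ker(\varphi \circ r) = K$ in the argument above.
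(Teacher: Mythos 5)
Your proof is correct and follows exactly the route the paper intends: the paper gives no explicit argument for Corollary 1, relying on Remark 2(i) (``every Hopfian group is weakly Hopfian'') together with Theorem 1, and your verification of that implication via the composite $\varphi\circ r$ is the standard one. Nothing further is needed.
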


\medskip
From the above we obtain:

\begin{thm}
 {\it Let $P$ be a polyhedron with
$\dim P = 2$ such that $\pi_1(P)$ is a knot group.  Then every
homotopy domination $d: P \rightarrow P$ is a homotopy
equivalence.}
\end{thm}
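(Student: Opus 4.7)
The plan is to deduce Theorem 2 as a direct consequence of Corollary 1. Since Corollary 1 already gives the conclusion for any $2$-dimensional polyhedron whose fundamental group admits a finite aspherical presentation and is Hopfian, it suffices to verify that every knot group $G = \pi_1(S^3 \setminus K)$ of a tame knot $K \subset S^3$ satisfies these two properties.

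For the first property, the key geometric input is that the knot complement $S^3 \setminus K$ (equivalently, the compact exterior $E(K)$) is aspherical. This is the classical consequence of Papakyriakopoulos's Sphere Theorem: $\pi_2(E(K)) = 0$, and all higher $\pi_i$ vanish since $E(K)$ is an irreducible $3$-manifold with non-empty incompressible boundary and infinite fundamental group. So $E(K)$ is a $K(G,1)$. Being a compact $3$-manifold with non-empty boundary, $E(K)$ collapses to a $2$-dimensional spine $L$, and $L$ is itself an aspherical finite $2$-complex with $\pi_1(L) \cong G$. Up to $3$-deformation one may assume $L$ is the standard $2$-complex of a finite presentation of $G$ (for instance, one may start from the Wirtinger presentation, whose associated $2$-complex has the correct Euler characteristic $0 = \chi(E(K))$, and use that a finite aspherical $2$-complex in a given homotopy type exists whenever the group is realized by such a complex at all). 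This yields a finite aspherical presentation of $G$.

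For the second property, knot groups are known to be residually finite: this follows from Thurston's geometrization together with Hempel's theorem that the fundamental groups of geometric $3$-manifolds are residually finite (for torus knots this is classical, for hyperbolic and satellite knots it goes through the geometric decomposition). Since $G$ is finitely generated, Malcev's theorem then implies that $G$ is Hopfian.

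With both hypotheses of Corollary 1 verified, the theorem follows. The main obstacle in executing this plan is the first step: one has to be careful that the asphericity of the knot complement descends to an aspherical \emph{presentation} $2$-complex, i.e.\ that the spine of $E(K)$ can really be chosen as the standard $2$-complex of a finite presentation of $G$. Once this identification is made, the Hopfian property is comparatively soft, relying only on residual finiteness plus Malcev's classical argument, and the rest of the proof is a direct appeal to Corollary 1.
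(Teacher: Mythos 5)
Your proposal is correct and follows essentially the same route as the paper: the paper likewise deduces the theorem from Corollary~1 by citing Papakyriakopoulos's asphericity of knots (via [P], [Br]) for the finite aspherical presentation, and Thurston's residual finiteness of knot groups together with Malcev's theorem for the Hopf property. You simply supply more detail (the spine argument and the passage from an aspherical $2$-complex to an aspherical presentation complex) at steps the paper treats as citations.
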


\begin{proof}
Any knot group $G$ has a finite aspherical
presentation --- there exists
a finite $CW$-complex $K(G,1)$ of dimension $2$
(by the result of [P], see also [Br]).
It is also known that any knot group is residually
finite ([Th],
see, for example, [Kb]). Any finitely generated
residually finite group is Hopfian [Ma]. Thus the proof
is completed.
\end{proof}

\begin{rem}
A special case of 2-dimensional polyhedra with fundamental
groups isomorphic to the Trefoil knot group $T = <a,b
\mid \hspace{0.2cm} a^{2}=b^3>$ was considered by the
author in [K1].
\end{rem}

\begin{thm} Let $P$ be a polyhedron with $\dim P = 2$ such
that $\pi_1(P)$ is one-related, torsion-free and weakly Hopfian.
Then every homotopy domination $d: P \rightarrow P$ is a~homotopy
equivalence.
\end{thm}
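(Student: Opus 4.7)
The plan is to reduce this statement directly to Theorem 1 (the main theorem), which already handles every $2$-dimensional polyhedron whose fundamental group is weakly Hopfian and admits a finite aspherical presentation. Since weak hopficity is already assumed, the only thing that needs to be verified is that a torsion-free one-relator group automatically admits a finite aspherical presentation. Once this is established, the conclusion is immediate.

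To produce the required aspherical presentation, I would invoke two classical results about one-relator groups. First, by the Magnus--Karrass--Solitar torsion theorem, the only torsion in a one-relator group $\langle x_1,\ldots,x_n \mid r \rangle$ comes from the case when the defining relator $r$ is a proper power $r = s^k$ with $k \geq 2$; equivalently, if the group is torsion-free, then $r$ is not a proper power in the free group $F(x_1,\ldots,x_n)$. Second, the Lyndon Identity Theorem (Lyndon, 1950) asserts that whenever the relator $r$ is not a proper power, the standard $2$-dimensional $CW$-complex $K$ associated with the presentation $\langle x_1,\ldots,x_n \mid r\rangle$ is aspherical, i.e.\ $K = K(G,1)$. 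Combining these two facts, the given one-relator torsion-free presentation of $\pi_1(P)$ is aspherical and finite.

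With the aspherical presentation in hand, Theorem 1 applies verbatim to $P$: its fundamental group is weakly Hopfian (by hypothesis) and has a finite aspherical presentation (by the paragraph above), so every homotopy domination $d \colon P \rightarrow P$ is a homotopy equivalence.

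The only nontrivial ingredient is the appeal to Lyndon's theorem together with the Magnus--Karrass--Solitar characterization of torsion; neither of these requires any calculation within the proof itself, so I would simply cite them (for instance the standard references such as Lyndon--Schupp or Bogopolski's textbook on combinatorial group theory). No obstacle of a computational nature is anticipated; the delicate point is conceptual, namely remembering that torsion-freeness is exactly the hypothesis that rules out proper-power relators and thereby unlocks asphericity of the standard $2$-complex.
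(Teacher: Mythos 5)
Your proposal is correct and follows essentially the same route as the paper: both reduce to Theorem~1 by observing that a torsion-free one-relator group has a finite aspherical presentation, with asphericity of the standard $2$-complex supplied by Lyndon's work (the paper cites [Ly] and [Br] for exactly this). Your extra step of invoking the Magnus--Karrass--Solitar torsion theorem to justify that torsion-freeness rules out a proper-power relator is a welcome explicit detail that the paper leaves implicit (compare its Remark on proper powers), but it does not change the argument.
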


\begin{proof}
Every one-related, torsion-free group has a finite aspherical
presentation. Precisely, a $CW$-complex naturally
corresponding to a given one-related presentation
(created by adding a single $2$-cell to a finite wedge
of circles corresponding to the generators) is aspherical
(compare [Ly], [Br]). So it follows from Theorem~
1.
\end{proof}

\begin{cor}
{\it Let $P$ be a polyhedron with $\dim P = 2$ such that
$\pi_1(P)$ is  one-related, torsion-free and Hopfian.
Then every homotopy domination $d: P \rightarrow
P$ is a homotopy equivalence.}
\end{cor}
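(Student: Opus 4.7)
The plan is to deduce Corollary 2 directly from Theorem 3 by checking that the hypothesis ``Hopfian'' is strictly stronger than the hypothesis ``weakly Hopfian'' required there. This reduction is recorded (without proof) in Remark 2(i); once it is unpacked, Theorem 3 applies verbatim and produces the conclusion. So the corollary should amount to little more than a one-line appeal, with a short verification of the implication between the two finiteness conditions on $\pi_1(P)$.

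To verify that Hopfian implies weakly Hopfian, I would argue straight from Definition 3. Assume $G$ is Hopfian and that a decomposition $G = K \rtimes H$ is given in the sense of Definition 3(ii), i.e. $H$ is a retract of $G$ with $K = \ker(G \to H)$, and suppose $H \cong G$. The retraction $r : G \to H$ is surjective, and composing with any isomorphism $H \xrightarrow{\sim} G$ yields a surjective endomorphism $\varphi : G \to G$. By Hopficity, $\varphi$ is an automorphism, hence $\ker \varphi = K = 1$. Thus $G$ is weakly Hopfian, as claimed.

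With this in hand, the corollary is immediate: under the assumptions of Corollary 2, $\pi_1(P)$ is one-related, torsion-free, and (by the preceding paragraph) weakly Hopfian, so the hypotheses of Theorem 3 are met and every homotopy domination $d : P \to P$ is a homotopy equivalence. There is no genuine obstacle to overcome, since the substantive content --- the Whitehead trees reduction, Kaplansky's result on free $ZG$-modules, and the asphericity of the standard $2$-complex of a torsion-free one-relator presentation --- has already been spent in the proofs of Theorem 1 and Theorem 3.
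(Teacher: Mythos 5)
Your proposal is correct and matches the paper's (implicit) argument exactly: the paper states this corollary without proof, relying on Theorem~3 together with Remark~2(i) that every Hopfian group is weakly Hopfian, which is precisely the reduction you carry out. Your explicit verification that a retraction $G \to H$ with $H \cong G$ yields a surjective endomorphism of $G$ whose kernel is $K$ is a sound unpacking of what the paper calls obvious.
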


\begin{rem}
Recall that  if a finite presentation of some group has
exactly one relator that is not a
proper power (i. e., a power of some element of this group),
then this group is torsion-free.
\end{rem}

\begin{rem}
(i) In many cases one-related groups are known to be Hopfian
(see, for example, [W], [W1], [ARV], [CL], [S]).
(ii) A recent results of M. Sapir and I. Spakulova
[SSp] show that  almost surely, a one-relator group with
at least 3 generators is residually finite, hence Hopfian.
\end{rem}

\begin{defn}[Hyperbolic Groups] Recall that a finitely
generated group is called {\it  hyperbolic\/} if its Calley
graph with respect to some finite generatic set is hyperbolic
[Gr].
\end{defn}

\begin{rem} (i) Almost every finitely presented group is
hyperbolic (see M. Gromov [Gr1]).
(ii) Examples of non-hyperbolic groups are $Z \times Z$
and just considered here knot groups.
(iii) All the hyperbolic groups without torsion have
finite Eilenberg-Mac Lane $CW$-complexes
(see, for example, [Kt]).
\end{rem}

\begin{thm}
 Let $P$ be a polyhedron with $\dim P
= 2$ such that  $\pi_1(P)$ is a hyperbolic group and has
a finite
aspherical presentation. Then every homotopy domination
$d: P
\rightarrow P$ is a homotopy equivalence.
\end{thm}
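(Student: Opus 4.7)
The plan is to reduce this to the main theorem of the paper (Theorem 1, equivalently Corollary 1): it suffices to establish that under the hypotheses, $\pi_1(P)$ is (weakly) Hopfian, since the finite aspherical presentation hypothesis is given outright.

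First I would observe that a finite aspherical presentation of $G = \pi_1(P)$ gives a $2$-dimensional $K(G,1)$, so $\mathrm{cd}\,G \leq 2$. In particular $G$ has finite cohomological dimension, and therefore $G$ is torsion-free (a group with elements of finite order has infinite cohomological dimension). Thus the hyperbolic group $G$ is automatically a torsion-free hyperbolic group, which matches the setting in Remark 5(iii).

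Next I would invoke Sela's theorem that every torsion-free hyperbolic group is Hopfian (Z.~Sela, \emph{Endomorphisms of hyperbolic groups I: The Hopf property}, Topology 38 (1999)). By Remark 2(i), Hopfian implies weakly Hopfian, so $G$ satisfies the hypotheses of Theorem 1. Applying Theorem 1 (or Corollary 1) then yields immediately that every homotopy domination $d: P \rightarrow P$ is a homotopy equivalence.

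The main obstacle, if one may call it that, is really just the citation of Sela's deep Hopf-property theorem; the rest of the argument is a packaging exercise, entirely parallel to the proof of Theorem 2 for knot groups, where residual finiteness was used to produce the Hopf property. I would not expect any new technical ingredient to arise, and the whole proof should take only a few lines.
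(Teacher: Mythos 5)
Your proof is correct and follows essentially the same route as the paper: cite Sela's Hopf-property theorem to get that $\pi_1(P)$ is Hopfian, hence weakly Hopfian, and apply Theorem 1 (Corollary 1). Your additional observation that the finite aspherical presentation forces $\pi_1(P)$ to be torsion-free is a small but worthwhile refinement, since Sela's result as stated in the cited paper is for torsion-free hyperbolic groups, whereas the paper's proof invokes it for all hyperbolic groups without comment.
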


\begin{proof}  By the result of Z. Sela [Se], every
hyperbolic group is Hopfian.  Hence it follows
from Corollary 1.
\end{proof}

\medskip
In the sequel, we will use properties of Baumslag-Solitar groups
$BS(m,n)$ and consider soluble groups (and more general, elementary
amenable groups).

\begin{defn}[Baumslag-Solitar Groups]
For each pair of integers  $0 < m \leq |n|$,
$BS(m,n)= <a,b \mid \hspace{0.2cm} ab^m a^{-1} = b^{n}>$
\hskip 2mm (compare [BS]).
\end{defn}

\begin{rem}
It is known that
(i)  $BS(m,n)$  soluble (but non-nilpotent)
if and only if $m=1$. These groups are, in particular,
%abelian-by-cyclic. They are also 
metabelian, hence Hopfian.
(ii) in general, $BS(m,n)$ is Hopfian if and only if $m=1$ or
$m$ and $n$ have the
same prime divisors.  %For example $BS(2,3)$ is non-Hopfian.
\end{rem}

\medskip
\begin{defn}[Elementary Amenable Groups]
 {\it Elementary amenable groups\/} is the smallest class of groups that contains all abelian and all
finite groups, and is closed under extensions and directed unions
(see [KLL]).
\end{defn}

\begin{thm}
 {\it Let $P$ be a polyhedron with $\dim P = 2$ such that
 $G = \pi_1(P)$ is elementary amenable and $cdG = 2$. Then
every homotopy domination $d: P \rightarrow P$ is a homotopy
equivalence.}
\end{thm}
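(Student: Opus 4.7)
My plan is to reduce the theorem to Corollary 1 by showing that $G=\pi_1(P)$ admits a finite aspherical presentation and is Hopfian. Since $P$ is a finite $2$-polyhedron, $G$ is finitely presented; since $\mathrm{cd}\,G=2$ is finite, $G$ is automatically torsion-free (any nontrivial torsion subgroup would force $\mathrm{cd}\,G=\infty$).

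The first step is to determine the algebraic structure of $G$. Combining Kropholler's theorem that elementary amenable groups of finite cohomological dimension are (virtually) soluble of Hirsch length equal to their cohomological dimension with Gildenhuys's classification of torsion-free soluble groups of cohomological dimension~$2$, one concludes that $G$ is isomorphic to a soluble Baumslag--Solitar group $BS(1,k)=Z[1/k]\rtimes Z$ for some integer $k\neq 0$. This list absorbs $Z^{2}$ and the Klein bottle group as the cases $k=\pm 1$, and in every case $G$ is metabelian.

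The second step is to verify both hypotheses of Corollary 1. A finite aspherical $2$-complex for $G$ is the standard $2$-complex of the one-relator presentation $\langle a,b\mid ab^{k}a^{-1}=b\rangle$ of $BS(1,k)$: the relator is not a proper power, so the associated $2$-complex is aspherical by Lyndon's identity theorem, exactly as already invoked in the proof of Theorem 3. Hopficity is immediate from P.~Hall's theorem, recalled in Remark 3(iv): every finitely generated abelian-by-nilpotent group --- in particular every finitely generated metabelian group --- is Hopfian. With both conditions in place, Corollary 1 yields the conclusion that every homotopy domination $d\colon P\to P$ is a homotopy equivalence.

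The step I expect to be the main obstacle is the structural classification above: it requires combining external results (Kropholler on elementary amenable groups of finite cohomological dimension, Gildenhuys on soluble groups of cohomological dimension~$2$) in order to pin $G$ down to an explicit metabelian list. Once that classification is secured, the verification of a finite aspherical presentation and of Hopficity is routine, and the subsequent appeal to Corollary 1 closes the proof.
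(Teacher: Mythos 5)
Your proposal is correct and follows essentially the same route as the paper: both arguments pin $G$ down to a soluble Baumslag--Solitar group $BS(1,m)$ (the paper cites this classification directly as [KLL, Theorem 3], you reassemble it from Kropholler and Gildenhuys), then obtain a finite aspherical presentation from the one-relator description, deduce Hopficity from the group being finitely generated metabelian via P.~Hall, and conclude by Corollary 1. No substantive difference.
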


\begin{proof} Let  $G$ be a finitely generated elementary
amenable group  and $cdG = 2$. Then,  $G$ has
a presentation of the form
$G = <a,b \mid \hspace{0.2cm} aba^{-1}=b^m>$, for
some $m \in Z-\{0\}$ [KLL, Theorem 3], i. e. is a
Baumslag-Solitar group B(1,m). Then, there exists a
finite $CW$-complex K(G,1) of dimension 2
(as in the proof of
Theorem 2). Moreover,  every
Baumslag-Solitar group $B(1,m)$ is metabelian, hence
(in the case of
finitely generated groups) Hopfian (compare Remark 8).
So, we apply Corollary 1, and the proof is complete.
\end{proof}

\bigskip
As a corollary, we obtain the following:

\begin{thm} Let $P$ be a polyhedron with $\dim P = 2$
such that  $G = \pi_1(P)$ is soluble and $cdG = 2$.
Then every
homotopy domination $d: P \rightarrow P$ is a homotopy
equivalence.
\end{thm}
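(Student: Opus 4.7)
The plan is to deduce this theorem as an immediate corollary of the preceding theorem on elementary amenable groups with $\mathrm{cd}\,G = 2$. The key observation is the containment of classes: every soluble group is elementary amenable. Indeed, by Definition of soluble (a poly-abelian group, in the notation of Definition 1), a soluble group is obtained from the trivial group by iterated extensions with abelian quotients, and the class of elementary amenable groups is by definition closed under extensions and contains all abelian groups.

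Consequently, if $P$ is a $2$-dimensional polyhedron with $G = \pi_1(P)$ soluble and $\mathrm{cd}\,G = 2$, then $G$ is elementary amenable with $\mathrm{cd}\,G = 2$, and I would apply the previous theorem directly. No further argument is required, since all the structural work (existence of a Baumslag--Solitar presentation $G = \langle a, b \mid aba^{-1} = b^m\rangle$, existence of a $2$-dimensional $K(G,1)$, metabelianness, and Hopficity) is already carried out in the proof of the elementary amenable case.

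There is no real obstacle here; the only minor point to check is the class inclusion, which is immediate from Definitions 1 and of elementary amenable groups. If one wished to give a slightly more self-contained version, one could instead invoke the cited result of [KLL, Theorem 3] directly: a finitely generated soluble group of cohomological dimension $2$ must be a Baumslag--Solitar group $BS(1,m)$, hence has a finite aspherical $2$-dimensional presentation and is Hopfian (being metabelian and finitely generated, by P. Hall's theorem recalled in Remark 3(iv)), so that Corollary 1 applies. Either route yields the conclusion.
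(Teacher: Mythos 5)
Your proposal matches the paper's own proof exactly: the paper deduces this theorem from the preceding one on elementary amenable groups by observing that any soluble group is poly-abelian, hence elementary amenable. The class inclusion argument you give is precisely the intended one, and your alternative self-contained route is just an unfolding of the same chain of citations.
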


\begin{proof} This is a corollary to Theorem 5.
Any soluble group is  poly-abelian, hence elementary amenable.
\end{proof}

\medskip
Our results will be completed by the following.

\begin{defn} [Limit Groups] A finitely generated group $G$ is
a {\it limit group\/} if, for any subset $S \subset G$, there
exists a homomorphism $f: G \rightarrow F$ (where $F$ is
a free
group of finite rank) so that the restriction of
$f$ to $S$ is
injective.
\end{defn}

\begin{rem}
(i) Examples of limit groups include finite-rank free
abelian
groups. (ii) Limit groups are non-soluble except
of free abelian groups.
(iii) Note that all the limit groups have finite
Eilenberg-Mac Lane $CW$-complexes.
\end{rem}

\medskip
The following useful lemma can be drawn, for example,
from [AB]:

\begin{lem}
 {\it Any limit group is  Hopfian.\/}
\end{lem}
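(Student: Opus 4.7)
The plan is to deduce the conclusion by a two-step reduction: first from ``limit group'' to ``residually finite finitely generated group'', and then from the latter to ``Hopfian'' via Mal'cev's theorem, which is already used in the proof of Theorem 2 (cited as [Ma]).

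First, I would unpack the definition of a limit group. Applied to a singleton $S = \{g\}$ with $g \neq 1$ in $G$, the defining property gives a homomorphism $f \colon G \to F$ into some finitely generated free group $F$ with $f(g) \neq 1$. Thus $G$ is \emph{residually free} (separated from the identity by maps to finitely generated free groups).

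Second, I would invoke the classical fact, already implicit in the introduction of this paper (Nielsen 1921, Hopf 1931), that every finitely generated free group is residually finite. Composing the map $f$ above with a homomorphism of $F$ onto a finite group that does not kill $f(g)$, one obtains a homomorphism of $G$ onto a finite group which does not kill $g$. Since $g \neq 1$ was arbitrary, this shows that $G$ is residually finite. Being a limit group, $G$ is finitely generated by assumption.

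Finally, I would apply Mal'cev's theorem (the same result [Ma] used in the proof of Theorem~2): every finitely generated residually finite group is Hopfian. This gives the conclusion. There is no real obstacle here; the only point requiring care is noting that the residual freeness inherited from the limit group definition can be upgraded to residual finiteness precisely because free groups themselves are residually finite, and that finite generation (built into the definition of limit group) is what makes Mal'cev's theorem applicable.
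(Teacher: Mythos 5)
Your proof is correct, but it takes a genuinely different route from the paper's. The paper quotes a structural result from [AB] (the proof of Lemma A.1 there): any sequence of epimorphisms $G = G_0 \rightarrow G_1 \rightarrow \cdots$ of a limit group eventually consists of isomorphisms; applying this to the constant sequence with a fixed self-epimorphism gives Hopficity directly. You instead argue from the definition: taking $S=\{g\}$ with $g\neq 1$ shows $G$ is residually free, residual finiteness of finitely generated free groups upgrades this to residual finiteness of $G$, and Mal'cev's theorem [Ma] (already invoked in the paper's proof of Theorem 2 for knot groups) finishes. Your route is more elementary and self-contained --- it uses only the stated definition plus two classical facts, and it runs in exact parallel with the paper's treatment of knot groups --- whereas the paper's route leans on a deeper property of limit groups (stabilization of epimorphism chains, coming from equational Noetherianity) but is then a one-line deduction. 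One small correction: the fact you need is that finitely generated free groups are \emph{residually finite}, which is classical but is not what the introduction's citation of Nielsen (1921) and Hopf (1931) asserts; that citation is for the \emph{Hopficity} of free groups. The residual finiteness of free groups is a separate (true, standard) fact, so the argument stands, but you should cite it as such rather than as ``implicit in the introduction.''
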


\begin{proof} Given a limit group $G$, any sequence
of epimorphisms $G = G_0 \rightarrow G_1 \rightarrow \cdots$
eventually consists of isomorphisms (see [AB, the proof of
Lemma A.1, p. 269]). Therefore, there is no an epimorphism
of G onto $G$ which is not an isomorphism
(take $G_i = G$, for all $i$, with the same
given epimorphism between $G_i$ and $G_{i+1}$).
Hence $G$ is Hopfian.
\end{proof}

\begin{thm}
 {\it Let $P$ be a polyhedron with
$\dim P = 2$ such that  $\pi_1(P)$ is a limit group and has
a finite aspherical presentation. Then every homotopy
domination $d: P \rightarrow P$ is a homotopy equivalence.\/}
\end{thm}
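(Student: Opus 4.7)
The plan is that this theorem falls out immediately from combining Corollary 1 (the Hopfian version of the main Theorem 1) with the Lemma just proved. Two of the three hypotheses of Corollary 1---namely $\dim P = 2$ and the existence of a finite aspherical presentation for $\pi_1(P)$---are part of our standing assumptions, so there is essentially nothing to check except Hopficity of $\pi_1(P)$.

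First I would invoke the Lemma: every limit group is Hopfian. Since $\pi_1(P)$ is by hypothesis a limit group, it is Hopfian. Combined with the assumption that $\pi_1(P)$ admits a finite aspherical presentation, this places $P$ squarely within the hypothesis set of Corollary 1. A single application of that corollary then yields the desired conclusion, that every homotopy domination $d: P \to P$ is a homotopy equivalence.

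There is no substantive obstacle in this last step. The real work has already been done in two earlier places: the proof of Theorem 1 (which uses the Whitehead Theorem on Trees to pass from $\pi_1$-isomorphism to a stable equivalence $P \vee \bigvee S^2 \simeq K(G,1) \vee \bigvee S^2$, and then Kaplansky's theorem to rule out a free $ZG$-module being isomorphic to a proper direct factor of itself), and the Lemma (which deduces Hopficity from the stabilization of any descending chain of epimorphisms between limit groups, by [AB, Lemma A.1]). The theorem is thus purely a packaging step, analogous to how Theorem 2 (knot groups), Theorem 4 (hyperbolic groups), and Theorem 5 (elementary amenable groups with $\mathrm{cd}\, G = 2$) were each obtained as immediate applications of Corollary 1 once the appropriate Hopficity input was identified.
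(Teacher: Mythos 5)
Your proposal matches the paper's own proof exactly: the paper proves this theorem by citing Corollary 1 together with the Lemma that every limit group is Hopfian. Nothing further is needed.
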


\begin{proof}
It follows from Corollary 1 and Lemma 1.
\end{proof}

\bigskip
\section{On different homotopy types of $ANR's$
do\-mi\-nating each other}

\medskip
In [B1, Ch.IX, (12.7)] K. Borsuk stated the following
question:

\begin{problem} Is it true that two $ANR's$, $P$ and
$Q$ homotopy dominating each other have the same
homotopy type?
\end{problem}

\medskip
By the previous results of this paper (Theorems 1-7,
Corollaries 1-2), we obtain:

\begin{cor}
 {\it Let $P, Q \in ANR$,  $\dim P = 2$,
 $\pi_1(P)$  has a finite aspherical presentation and is weakly
Hopfian. Then $P \geq Q$ and $Q \geq P$, implie that $P \simeq
Q$.}
\end{cor}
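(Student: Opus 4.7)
The plan is to reduce the statement to Theorem 1 applied to $P$. My idea is to construct a homotopy self-domination $P\to P$ out of the two mutual dominations $P\geq Q$ and $Q\geq P$, invoke Theorem 1 to promote it to a homotopy equivalence, and then extract from this that one of the original dominating maps is itself a homotopy equivalence between $P$ and $Q$.

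Concretely, I would choose $d\colon Q\to P$ and $s\colon P\to Q$ with $ds \simeq \mathrm{id}_P$, witnessing $P\geq Q$, and $d'\colon P\to Q$ and $s'\colon Q\to P$ with $d's' \simeq \mathrm{id}_Q$, witnessing $Q\geq P$. The key observation is that $dd'\colon P\to P$ is a homotopy self-domination of $P$, with section $s's\colon P\to P$:
$$(dd')(s's) \;=\; d(d's')s \;\simeq\; d\,\mathrm{id}_Q\,s \;=\; ds \;\simeq\; \mathrm{id}_P.$$
Since $P$ satisfies the hypotheses of Theorem 1 (namely $\dim P = 2$ and $\pi_1(P)$ has a finite aspherical presentation and is weakly Hopfian), that theorem forces $dd'$ to be a homotopy equivalence; let $e\colon P\to P$ denote a homotopy inverse.

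To finish, I observe that $d'\colon P\to Q$ has both a right homotopy inverse $s'$ (since $d's'\simeq \mathrm{id}_Q$) and a left homotopy inverse $ed$ (since $(ed)d' = e(dd') \simeq \mathrm{id}_P$). A standard one-line calculation $s'\simeq (ed\cdot d')s' = ed(d's') \simeq ed$ then shows that $d'$ is a homotopy equivalence with inverse $s'\simeq ed$, whence $P\simeq Q$. I do not anticipate any real obstacle: the whole argument is a formal composition-chase once the right self-domination is spotted. The only subtlety worth flagging is that the hypotheses are placed on $P$ alone and not on $Q$, so it is essential that the self-domination we construct lives on the $P$-side, where Theorem 1 can be invoked.
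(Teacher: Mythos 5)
Your argument is correct and is precisely the (implicit) argument the paper intends when it derives this corollary from Theorem~1: compose the two mutual dominations to get a self-domination of $P$, apply Theorem~1 to conclude it is a homotopy equivalence, and then the standard two-sided-inverse calculation shows the domination $P\to Q$ is itself a homotopy equivalence. The only point you share with the paper in leaving unaddressed is the silent identification of the $2$-dimensional ANR $P$ with a $2$-dimensional polyhedron so that Theorem~1 applies, but this is a gloss the paper itself makes.
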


\begin{cor}
 {\it Let $P, Q \in ANR$, $\dim P =
2$,  $\pi_1(P)$
  has a finite presentation with one relation,
 is torsion-free and weakly Hopfian.
 Then $P \geq Q$ and $Q \geq P$, implie that
 $P \simeq Q$.}
\end{cor}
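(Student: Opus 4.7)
The plan is to reduce the statement to a self-domination of $P$ and invoke Theorem 3.

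First, using that (compact) ANRs have the homotopy type of finite polyhedra, I would realize the two dominations by actual continuous maps: from $P \geq Q$, maps $f \colon P \to Q$ and $g \colon Q \to P$ with $f \circ g \simeq \mathrm{id}_Q$; from $Q \geq P$, maps $f' \colon Q \to P$ and $g' \colon P \to Q$ with $f' \circ g' \simeq \mathrm{id}_P$.

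The key step is to form the composites $d = f' \circ f \colon P \to P$ and $u = g \circ g' \colon P \to P$ and observe that
\[
d \circ u \;=\; f' \circ (f \circ g) \circ g' \;\simeq\; f' \circ g' \;\simeq\; \mathrm{id}_P,
\]
so $d$ is a homotopy domination of $P$ over itself. Since $\pi_1(P)$ is one-related and torsion-free, the standard $2$-complex of any one-relator torsion-free presentation is aspherical (Lyndon's theorem, used already in the proof of Theorem 3), so $\pi_1(P)$ has a finite aspherical presentation. Combined with the weak Hopficity of $\pi_1(P)$, this places $P$ under the hypotheses of Theorem 3, which then tells us that $d$ is a homotopy equivalence.

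To finish, I would pick a homotopy inverse $v$ of $d$. Then $f \colon P \to Q$ admits both a left and a right homotopy inverse: $v \circ f' \colon Q \to P$ is a left inverse, since $(v \circ f') \circ f = v \circ d \simeq \mathrm{id}_P$, while $g \colon Q \to P$ is a right inverse by construction. The standard one-line argument $v \circ f' \simeq (v \circ f') \circ (f \circ g) \simeq ((v \circ f') \circ f) \circ g \simeq g$ then shows the two one-sided inverses agree up to homotopy and give a two-sided homotopy inverse of $f$, so $f$ is a homotopy equivalence and $P \simeq Q$. I do not anticipate any serious obstacle: the heavy lifting is done by Theorem 3, and the remainder is a short composition trick converting mutual domination into a single self-domination; the one-relator/torsion-free/weakly Hopfian hypotheses feed directly into Theorem 3 via Lyndon's asphericity, and the passage from ANRs to polyhedra is classical.
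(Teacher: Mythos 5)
Your proof is correct and follows essentially the same route the paper intends: the paper derives this corollary directly from Theorem 3 (via Lyndon's asphericity of torsion-free one-relator presentations) without writing out the reduction, and your composition trick turning mutual domination into a self-domination of $P$ is exactly the standard argument being invoked. The only point left at the same level of informality as the paper itself is the replacement of the $2$-dimensional ANRs by $2$-dimensional polyhedra.
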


\begin{cor}
 {\it Let $P, Q \in ANR$,
$\dim P = 2$, and  $G = \pi_1(P)$ satisfies one of the
follo\-wing
conditions:

\begin{itemize}

\item[(i)]
 $G$  is soluble and $cdG \leq 2,$

\item[(ii)] $G$  is elementary amenable and $cdG \leq 2$,

\item[(iii)]
 $G$  is a limit group and has a finite aspherical
 presentation,

\item[(iv)]
 $G$  is a hyperbolic group and has a finite aspherical
 presentation,

\item[(v)]
 $G$  is a knot group.

\end{itemize}

 Then $P \geq Q$ and $Q \geq P$, implie that
 $P \simeq Q$.}
\end{cor}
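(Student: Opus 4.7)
The plan is to reduce the corollary to the self-domination theorems already proved in the paper. The key structural observation is that mutual homotopy dominations always produce a homotopy self-domination of either space.

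I would begin by fixing notation. From $P \geq Q$ choose maps $f : Q \rightarrow P$ and $g : P \rightarrow Q$ with $f \circ g \simeq \mathrm{id}_P$; from $Q \geq P$ choose $f' : P \rightarrow Q$ and $g' : Q \rightarrow P$ with $f' \circ g' \simeq \mathrm{id}_Q$. Setting $d := f \circ f'$ and $s := g' \circ g$, the computation
$$d \circ s \;=\; f \circ (f' \circ g') \circ g \;\simeq\; f \circ g \;\simeq\; \mathrm{id}_P$$
shows that $d : P \rightarrow P$ is a homotopy domination of $P$ over itself.

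Next I would invoke the appropriate earlier theorem in each of the five cases. Each of conditions (i)--(v) matches the hypothesis of one of Theorems~2--7 or Corollary~1 of the paper applied to $P$: (i) and (ii) to Theorems~6 and~5 respectively (the marginal cases with $\mathrm{cd}\,G \leq 1$ are covered by Corollary~1, since a torsion-free soluble or elementary amenable group of cohomological dimension at most one is cyclic and hence Hopfian with an aspherical presentation), (iii) to Theorem~7, (iv) to Theorem~3, and (v) to Theorem~2. In all five cases every homotopy self-domination of $P$ is a homotopy equivalence, so $d = f \circ f'$ is itself a homotopy equivalence.

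Finally I would upgrade this to $P \simeq Q$. Since $f$ and $f'$ are homotopy dominations, the induced maps $f_*$ and $f'_*$ are surjective on every $\pi_n$; combined with $(f \circ f')_* = f_* \circ f'_*$ being an isomorphism, an elementary diagram chase forces $f'_*$ to be injective as well, hence an isomorphism, on every homotopy group. Connected ANR's have the homotopy type of CW complexes, so Whitehead's theorem promotes the weak equivalence $f' : P \rightarrow Q$ to a homotopy equivalence, yielding $P \simeq Q$. The only real work is the case-by-case bookkeeping in the second step; once the self-domination $d$ has been constructed, the rest of the argument is essentially formal.
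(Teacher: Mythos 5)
Your argument is correct and is exactly the reduction the paper intends (the paper leaves this corollary as an immediate consequence of Theorems 1--7 and gives no written proof): compose the two dominations to obtain a self-domination $d = f\circ f'$ of $P$, apply the relevant self-domination theorem in each case, and formally upgrade $f'$ to a homotopy equivalence; your treatment of the marginal case $\mathrm{cd}\,G\leq 1$ (where such a group is free, hence cyclic, hence Hopfian with an aspherical presentation) is a welcome detail that the paper's statement ``$\mathrm{cd}\,G\leq 2$'' versus the theorems' ``$\mathrm{cd}\,G=2$'' actually requires. The only slip is bookkeeping: case (iv), hyperbolic groups, is Theorem~4 of the paper, not Theorem~3.
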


\begin{flushright}
$\Box$
\end{flushright}

\medskip
One may ask the following questions (compare strongly
related Problem 2):

\begin{problem}
 Does there exist a finitely presented one-related
group $G$ and an $r$-homomorphism $r: G \rightarrow H$,
where $H
\cong G$, such that $r$ is not an isomorphism?
\end{problem}

\begin{problem}
 Do there exist  finitely presented
one-related, non-isomorphic groups $G$ and $H$ and
$r$-homomorphisms $h: G \rightarrow H$, and
$h: H \rightarrow G'$,
where $G' \cong G$?
\end{problem}

\bigskip
\section{On two quasi-homeomorphic $ANR's$ of different
homotopy types}

\medskip
The main problem we consider here is also related to the
other question published by K. Borsuk in [B2]. Let us recall
two definitions (see [MS] and  [B2], respectively).

\begin{defn}[S. Marde\v{s}i\'{c}, J. Segal]
Let $X$ and $Y$ be
compacta. $X$ is said to be $Y$-like, if for every
$\varepsilon
> 0$ there exists a continuous map
$f:X \xrightarrow{onto} Y$ such that, for  all $y
\in Y$,   diam$(f^{-1}(y)) <  \varepsilon$.
\end{defn}

\begin{defn}
[K. Kuratowski, S. Ulam] Two compacta
$X$ and $Y$ are {\it quasi-ho\-meo\-mor\-phic\/}
 if $X$ is $Y$-like and $Y$ is $X$-like.
 \end{defn}

\medskip
Borsuk found two quasi-homeomorphic compacta of the different
shapes and asked [B2, Problem (12.7), p. 231-233]:

\begin{problem}  Is it true that the shapes of
two quasi-homeomorphic ANRs are equal?
\end{problem}

\medskip
It is well-known that on ANR's shape and homotopy theory
coincide, thus in this problem shapes can be replaced
by homotopy types.

\smallskip
From Corollary 5 we can drawn:

\begin{cor}
{\it Let $P, Q \in ANR$,  $\dim P = 2$ and  $G = \pi_1(P)$
satisfies one of the follo\-wing conditions:

\begin{itemize}

\item[(i)]
 $G$  is soluble and $cdG \leq 2,$

\item[(ii)] $G$  is elementary amenable and $cdG \leq 2$,

\item[(iii)]
 $G$  is a limit group and has a finite aspherical
 presentation,

\item[(iv)]
 $G$  is a hyperbolic group and has a finite aspherical
 presentation,

\item[(v)]  $G$  is a knot group.

\end{itemize}
If $P$ and $Q$ are quasi-homeomorphic, then $P \simeq Q$.}

\end{cor}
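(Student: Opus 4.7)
The plan is to reduce this corollary to Corollary 5 by showing that any two quasi-homeomorphic ANRs mutually homotopy dominate one another. Once $P \leq Q$ and $Q \leq P$ are in hand, Corollary 5 applied under any of the hypotheses (i)--(v) immediately yields $P \simeq Q$.

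The crucial step is the classical lemma: if $X$ and $Y$ are ANRs and $X$ is $Y$-like, then $Y \leq X$. To prove it, one converts a sufficiently fine $\varepsilon$-map $f \colon X \to Y$ into a homotopy domination. Using that $Y$ is an ANR, fix an open neighborhood $U \supset Y$ in some ambient ANR, a retraction $r \colon U \to Y$, and a modulus $\delta > 0$ such that any two pointwise $\delta$-close maps into $Y$ are homotopic. Choose $\varepsilon > 0$ so small that $\varepsilon$-sets get moved by $r$ to $\delta$-sets, and apply $Y$-likeness to produce a continuous surjection $f \colon X \to Y$ with $\mathrm{diam}(f^{-1}(y)) < \varepsilon$ for every $y \in Y$. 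A partition-of-unity argument on $Y$ combined with the ANR property of $X$ for simplex-by-simplex extension then produces a continuous $g \colon Y \to X$ with $f \circ g$ uniformly $\delta$-close to $\mathrm{id}_Y$, hence homotopic to it. So $f$ exhibits $Y$ as being homotopy dominated by $X$.

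I would then apply this lemma twice: $P$ being $Q$-like yields $Q \leq P$, while $Q$ being $P$-like yields $P \leq Q$. Corollary 5 then concludes $P \simeq Q$.

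The main technical delicacy is coordinating the moduli $\varepsilon, \delta$ and organizing the simplex-by-simplex construction of $g$ so that all images stay inside regions where the ANR extension applies. This is standard in shape theory (indeed, essentially built into the way $\varepsilon$-maps between ANR's are handled in [MS]) but is the only place where genuine work is required; the conditions (i)--(v) on $\pi_1(P)$ play no role in the domination step and enter only through the invocation of Corollary 5.
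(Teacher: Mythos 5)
Your proposal follows the paper's proof exactly: the paper likewise reduces to Corollary 5 by citing the fact that an $\varepsilon$-map between $ANR$'s yields a homotopy domination (referring to [B2, (12.6), p.~233]), so that two quasi-homeomorphic $ANR$'s satisfy $P \leq Q$ and $Q \leq P$. (The paper states the cited lemma with the domination running in the opposite direction from yours --- ``$Q$ is $P$-like implies $Q \leq P$'' --- but since quasi-homeomorphism is symmetric this makes no difference to the argument, and your sketch of the nerve/partition-of-unity construction is the standard proof of that lemma, which the paper simply cites.)
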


\begin{proof}
It is known that, if $P$ and $Q$ are two $ANR$s, %$Y \in ANR$
and $Q$ is $P$-like, then $Q \leq P$ (compare [B2, (12.6)
p.~233]). Thus, if $P$ and $Q$ are two quasi-homeomorphic
$ANR$s,
then $P \leq Q$ and $Q \leq P$. We apply Corollary 5
and the proof is complete.
\end{proof}

\vskip 2mm
\bigskip
\section{Final Remarks}

\medskip
(1) Another classes of groups satisfying the
assumptions of the main theorem one can find
among subgroups of the Coxeter groups, small cancelations
groups, Fuchsian groups, and among others.

\vskip 1mm
\bigskip

\end{document}